\newcommand{\norm}[1]{\left \lVert#1\right \rVert}
\newtheorem{theorem}{Theorem}[section]
\newtheorem{lemma}{Lemma}[section]
\newtheorem{corollary}{Corollary}[section]
\theoremstyle{definition}
\newtheorem{definition}{Definition}[section]
\theoremstyle{remark}
\newtheorem{remark}{Remark}[section]
\numberwithin{equation}{section}
\begin{document}
	

		\title{Perturbation of fractional strongly continuous cosine family operators}
		
		\date{}
		

		\author[]{Ismail T. Huseynov \thanks{Email: 
				\texttt{ismail.huseynov@emu.edu.tr}}}
		\author[]{Arzu Ahmadova \thanks{Email: 
				\texttt{arzu.ahmadova@emu.edu.tr}}}
		\author[]{Nazim I. Mahmudov\thanks{Corresponding author. Email: \texttt{nazim.mahmudov@emu.edu.tr}}}

		\affil{Department of Mathematics, Eastern Mediterranean University, Mersin 10, 99628, T.R. North Cyprus, Turkey}
		
		\maketitle
		
		\begin{abstract}
			\noindent Perturbation theory has long been a very useful tool in the hands of mathematicians and physicists. The purpose of this paper is to prove some perturbation results for infinitesimal generators of fractional strongly continuous cosine families. That is, we impose sufficient conditions such that $\mathscr{A}$ is the infinitesimal generator of a fractional strongly continuous cosine family in a Banach space $\mathcal{X}$, and $\mathscr{B}$ is a bounded linear operator in $\mathcal{X}$, then $\mathscr{A}+\mathscr{B}$ is also the infinitesimal generator of a fractional strongly continuous cosine family in $\mathcal{X}$. Our results coincide with the classical ones when $\alpha=2$.
			
			\textit{Keywords:}
			Perturbation theory, strongly continuous fractional cosine and sine families,
			infinitesimal generator,
			Banach space 
		\end{abstract}

	\section{Introduction}\label{Sec:intro}
	A strong inspiration for the study of perturbation theory for strongly continuous operator families comes from the fact that they have proven to be useful tools for evolution equations in modeling many physical phenomena. The perturbation theory of linear operators in a Banach space has been explored to a considerable extent, most notably by Phillips \cite{philips},  Travis \& Webb \cite{TW3} and Lutz \cite{lutz}.
	In \cite{philips}, Phillips intended to perturb the infinitesimal generator by adding to it a linear bounded operator and investigating some perturbation results for infinitesimal generators of strongly continuous $C_{0}$-semigroups.
	In \cite{TW3}, Travis and Webb have proposed sufficient conditions for perturbed cosine operator families.
In \cite{lutz}, Lutz has first studied the implication for the homogeneous initial value problem associated with the infinitesimal generator of the cosine function generated by bounded time-varying perturbations:
	
	\begin{equation}\label{secondorder}
		\begin{cases}
			v^{\prime \prime}(t)= (\mathscr{A}+\mathscr{B}(t))v(t), \quad t \in \mathbb{R},\\
			v(0)=v_0 \in \mathcal{D}(\mathscr{A}), \quad v^{\prime}(0)=v_1 \in \mathcal{D}(\mathscr{A}),
		\end{cases}
	\end{equation}  
where $\mathscr{B}(\cdot):\mathbb{R_{+}}\to \mathcal{L}(\mathcal{X})$ is a continuously differentiable function on $\mathbb{R_{+}}$.
	
	The fractional analogue of the abstract problem \eqref{secondorder} has been established by Bazhlekova in \cite{bazhlekova2}. Bazhlekova \cite{bazhlekova2} has proposed to uniquely determine a classical solution of the following homogeneous abstract Cauchy problem for the fractional evolution equation with Caputo derivative by time-dependent perturbations:
	\begin{equation*}
		\begin{cases*}
			\left( \prescript{C}{}{\mathscr{D}^{\alpha}_{0_{+}}}v\right) (t)= (\mathscr{A}+\mathscr{B}(t))v(t), \quad t > 0,\\
			v(0)=v_0 \in \mathcal{D}(\mathscr{A}), \quad v^{\prime}(0)=0,
		\end{cases*}
	\end{equation*} 
	where $\mathscr{B}(\cdot):\mathbb{R_{+}}\to \mathcal{L}(\mathcal{X})$ is a continuous function on $\mathbb{R_{+}}$.

	Furthermore, the fractional analogue of the inhomogeneous abstract problem \eqref{secondorder} was developed by Ahmadova et al. in \cite{A-H-M} and the results presented in \cite{A-H-M} extend those of \cite{lutz,bazhlekova2} in several aspects. The authors in \cite{A-H-M} have shown that the inhomogeneous abstract Cauchy problem with the infinitesimal generator $\mathscr{A}$ of fractional cosine families remains uniformly well-posed under bounded time-varying perturbations:
	\begin{equation*}
		\begin{cases*}
			\left( \prescript{C}{}{\mathscr{D}^{\alpha}_{0_{+}}}v\right) (t)= (\mathscr{A}+\mathscr{B}(t))v(t)+h(t), \quad t > 0,\\
			v(0)=v_0 \in \mathcal{D}(\mathscr{A}), \quad v^{\prime}(0)=v_1 \in \mathcal{D}(\mathscr{A}),
		\end{cases*}
	\end{equation*} 
	where $\mathscr{B}(\cdot):\mathbb{R_{+}}\to \mathcal{L}(\mathcal{X})$ and $h(\cdot):\mathbb{R_{+}}\to \mathcal{X}$ are continuously differentiable functions on $\mathbb{R_{+}}$.

	Therefore, the goal of this paper is to develop some perturbation results for infinitesimal generators of fractional strongly continuous cosine families.
	That is, we establish sufficient conditions such that $\mathscr{A}$ is the infinitesimal generator of a fractional strongly continuous cosine (sine) family in a Banach space $\mathcal{X}$, and $\mathscr{B}$ is a bounded linear operator in $\mathcal{X}$, then $\mathscr{A}+\mathscr{B}$ is also the infinitesimal generator of a fractional strongly continuous cosine (sine) family in $\mathcal{X}$. The pioneering work on this subject in the classical sense was done by Phillips \cite{philips} and Travis \& Webb \cite{TW3} and our development follows these approaches.

	\section{Preliminaries}\label{prel}
	We start this section by briefly introducing the essential structure of an operator theory for linear operators. For the more salient details on these subjects, see the textbook \cite{engel}.

	Let $\mathbb{R_{+}}=[0,\infty)$ and $\mathbb{N}$ denote the set of natural numbers with $\mathbb{N}_{0}=\mathbb{N}\cup\left\lbrace 0\right\rbrace$.
	Let $\mathcal{X}$ be a Banach space equipped with the norm $\norm{\cdot}$. 
	We donote by $\mathcal{L}(\mathcal{X})$ the Banach algebra of all bounded linear operators on $\mathcal{X}$ and becomes a Banach space with regard to the norm $\norm{\mathscr{T}}=\sup\left\lbrace \norm{\mathscr{T}x}: \norm{x} \leq 1\right\rbrace $, for any $\mathscr{T}\in \mathcal{L}(\mathcal{X})$.
	Let $\mathcal{D}(\mathscr{A})$ be the domain of $\mathscr{A}$ and $\rho(\mathscr{A})$ be the resolvent set of $\mathscr{A}$. The identity and zero operators on $\mathcal{X}$ are denoted by $\mathscr{I}\in \mathcal{L}(\mathcal{X})$ and $0\in \mathcal{L}(\mathcal{X})$, respectively.

	We will use the following functional spaces \cite{engel} through the paper:
	
	\begin{itemize}
		
		\item
		$\mathbb{C}\left(\mathbb{R_{+}},\mathcal{X}\right)$ denotes the Banach space of continuous $\mathcal{X}$-valued functions $g(\cdot):\mathbb{R_{+}}\to \mathcal{X} $ equipped with an infinity norm $\|g\|_{\infty}=\sup\limits_{t\in \mathbb{R_{+}}}\norm{g(t)}$;

		\item
		$\mathbb{C}^{n}(\mathbb{R_{+}},\mathcal{X})$, $n\in \mathbb{N}$ denotes the Banach space of $n$-times continuously differentiable $\mathcal{X}$-valued functions defined by 
		\begin{equation*}
			\mathbb{C}^{n}(\mathbb{R_{+}},\mathcal{X})=\left\lbrace g(\cdot)\in\mathbb{C}^{n}(\mathbb{R_{+}},\mathcal{X}): g^{(n)}(\cdot) \in\mathbb{C}(\mathbb{R_{+}},\mathcal{X}), n \in \mathbb{N}\right\rbrace
		\end{equation*}
		and equipped with an infinity norm $\|g\|_{\infty}=\sum\limits_{k=0}^{n}\sup\limits_{t\in \mathbb{R_{+}}}\norm{g^{(k)}(t)}$.
		In addition, $\mathbb{C}^{n}(\mathbb{R_{+}},\mathcal{X})\subset \mathbb{C}(\mathbb{R_{+}},
		\mathcal{X})$, $n\in \mathbb{N}$.
	\end{itemize}

 We will use the following notation for $\alpha\geq 0$:
		
		\begin{align*}
			g_{\alpha}(t)=
			\begin{cases*}
				\frac{t^{\alpha-1}}{\Gamma(\alpha)}, \quad t>0,\\
				0, \qquad t \leq 0,
			\end{cases*}
		\end{align*}
		where $\Gamma(\cdot):\mathbb{R_{+}}\to \mathbb{R}$ is the gamma function. Note that $g_{0}(t)=0$, since $\frac{1}{\Gamma(0)}=0$, $t\in \mathbb{R}$. These
		functions are satisfying the following semigroup property:
		\begin{equation}\label{galpha}
			(g_{\alpha}\ast g_{\beta})(t)=g_{\alpha+\beta}(t), \quad t \in \mathbb{R}.
		\end{equation}

	\begin{definition}\cite{kexue1}
		Let $\alpha \in (1,2)$. A family $\mathscr{C}_{\alpha}(\cdot; \mathscr{A}):\mathbb{R_{+}}\to \mathcal{L}(\mathcal{X})$ of all bounded linear operators on $\mathcal{X}$ is called a fractional strongly continuous cosine family if it satisfies the following hypotheses:
		\begin{itemize}
			\item $\mathscr{C}_{\alpha}(t;\mathscr{A})$ is strongly continuous for all $t \in \mathbb{R_{+}}$ and $\mathscr{C}_{\alpha}(0;\mathscr{A})=\mathscr{I}$;
			\item $\mathscr{C}_{\alpha}(s;\mathscr{A})\mathscr{C}_{\alpha}(t;\mathscr{A})=\mathscr{C}_{\alpha}(t;\mathscr{A})\mathscr{C}_{\alpha}(s;\mathscr{A})$ for all $s,t\in \mathbb{R_{+}}$;
			\item The functional equation
			\begin{equation}
				\mathscr{C}_{\alpha}(s;\mathscr{A})\mathcal{I}_{t}^{\alpha}\mathscr{C}_{\alpha}(t;\mathscr{A})-\mathcal{I}_{s}^{\alpha}\mathscr{C}_{\alpha}(s;\mathscr{A})\mathscr{C}_{\alpha}(t;\mathscr{A})=\mathcal{I}_{t}^{\alpha}\mathscr{C}_{\alpha}(t;\mathscr{A})-\mathcal{I}_{s}^{\alpha}\mathscr{C}_{\alpha}(s;\mathscr{A}) \quad \text{holds for all} \quad s,t \in \mathbb{R_{+}}.
			\end{equation}
		\end{itemize}
	\end{definition}
	The closed linear operator $\mathscr{A}$ is defined by
	\begin{align}
		\mathscr{A}x \coloneqq \Gamma(\alpha+1)\lim_{t \to 0_{+}}\frac{\mathscr{C}_{\alpha}(t;\mathscr{A})x-x}{t^{\alpha}}, \quad x \in \mathcal{D}(\mathscr{A})\coloneqq \left\lbrace x \in \mathcal{X}: \mathscr{C}_{\alpha}(\cdot;\mathscr{A}) \in \mathbb{C}^{2}\left( \mathbb{R_{+}},\mathcal{X}\right) \right\rbrace,
	\end{align}
	where $\mathscr{A}$ is the infinitesimal generator of the fractional strongly continuous cosine family $\{ \mathscr{C}_{\alpha}(t;\mathscr{A}):t\in \mathbb{R_{+}}\} $.
	
	\begin{definition}\cite{kexue1}
		A fractional strongly continuous cosine family $\{ \mathscr{C}_{\alpha}(t;\mathscr{A}):t\in \mathbb{R_{+}}\} $ is said to be exponentially bounded if there exists constants $M\geq 1$, $\omega \geq 0$ such that
		\begin{equation}\label{exp}
			\norm{\mathscr{C}_{\alpha}(t;\mathscr{A})}\leq M e^{\omega t}, \quad  t \in \mathbb{R_{+}}. 
		\end{equation} 
	\end{definition}
	
	\begin{definition}\cite{kexue1}
		The fractional strongly continuous  sine family $\mathscr{S}_{\alpha}(\cdot;\mathscr{A}):\mathbb{R_{+}}\to \mathcal{L}(\mathcal{X})$ associated with $\mathscr{C}_{\alpha}$ is defined by
		\begin{equation}
			\mathscr{S}_{\alpha}(t;\mathscr{A})x=\int_{0}^{t}\mathscr{C}_{\alpha}(s;\mathscr{A})x\mathrm{d}s, \quad x \in \mathcal{X}, \quad t \in \mathbb{R_{+}}.
		\end{equation}
		This implies that 
		\begin{equation}\label{AA}
			\mathscr{D}^{1}_{t}\mathscr{S}_{\alpha}(t;\mathscr{A})x=\mathscr{C}_{\alpha}(t;\mathscr{A})x, \quad x \in \mathcal{X}, \quad t \in \mathbb{R_{+}}.
		\end{equation}
	\end{definition}

	\begin{definition}\cite{kexue1}
		The fractional strongly continuous  Riemann-Liouville family $\mathscr{T}_{\alpha}(\cdot;\mathscr{A}):\mathbb{R_{+}}\to \mathcal{L}(\mathcal{X})$ associated with $\mathscr{C}_{\alpha}$ is defined by
		\begin{equation}\label{Talpha}
			\mathscr{T}_{\alpha}(t;\mathscr{A})x=\mathcal{I}_{t}^{\alpha-1}\mathscr{C}_{\alpha}(t;\mathscr{A})x=\int_{0}^{t}g_{\alpha-1}(t-s)\mathscr{C}_{\alpha}(s;\mathscr{A})x\mathrm{d}s,\quad x \in \mathcal{X}, \quad t \in \mathbb{R_{+}}.
		\end{equation}
	\end{definition}

	\begin{theorem}[\cite{kexue1}]
		Let $\{ \mathscr{C}_{\alpha}(t;\mathscr{A}):t\in \mathbb{R_{+}}\} $ be a fractional strongly continuous cosine family in $\mathcal{X}$ satisfying \eqref{exp}  and let $\mathscr{A}$ be the infinitesimal generator of $\{ \mathscr{C}_{\alpha}(t;\mathscr{A}):t\in \mathbb{R_{+}}\}$. Then for $Re(\lambda)>\omega$, $\lambda^{\alpha} \in \rho(\mathscr{A})$, $\alpha \in (1,2]$, the following relations hold true: 
		\begin{align}\label{kosinus}
			\lambda^{\alpha-1} &\mathcal{R}(\lambda^
			{\alpha};\mathscr{A})x=\int_{0}^{\infty}e^{-\lambda t}\mathscr{C}_{\alpha}(t;\mathscr{A}
			)x\mathrm{d}t, \quad x \in \mathcal{X},\\
			 \lambda^{\alpha-2}&\mathcal{R}(\lambda^{\alpha};\mathscr{A})x=\int_{0}^{\infty}e^{-\lambda t}\mathscr{S}_{\alpha}(t;\mathscr{A})x\mathrm{d}t, \quad x \in \mathcal{X},\\
			&\mathcal{R}(\lambda^{\alpha};\mathscr{A})x=\int_{0}^{\infty}e^{-\lambda t}\mathscr{T}_{\alpha}(t;\mathscr{A})x\mathrm{d}t, \quad x \in \mathcal{X}\label{sinus}.
		\end{align} 
	\end{theorem}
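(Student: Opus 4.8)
The plan is to compute the Laplace transform of each of the three families and identify it with the resolvent $\mathcal{R}(\lambda^{\alpha};\mathscr{A})\coloneqq(\lambda^{\alpha}\mathscr{I}-\mathscr{A})^{-1}$. First I would observe that the exponential estimate \eqref{exp} guarantees absolute convergence of all three Laplace integrals whenever $\mathrm{Re}(\lambda)>\omega$: indeed $\norm{e^{-\lambda t}\mathscr{C}_{\alpha}(t;\mathscr{A})x}\leq M e^{-(\mathrm{Re}(\lambda)-\omega)t}\norm{x}$ is integrable on $\mathbb{R_{+}}$, and the bounds for $\mathscr{S}_{\alpha}$ and $\mathscr{T}_{\alpha}$ follow from their representations as convolutions of $\mathscr{C}_{\alpha}$ against $g_{1}$ and $g_{\alpha-1}$. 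Hence the operator $\widehat{\mathscr{C}}(\lambda)x\coloneqq\int_{0}^{\infty}e^{-\lambda t}\mathscr{C}_{\alpha}(t;\mathscr{A})x\,\mathrm{d}t$ is a well-defined bounded operator on $\mathcal{X}$ for $\mathrm{Re}(\lambda)>\omega$.

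The core relation is \eqref{kosinus}, and the key input is the Volterra equation expressing that $\mathscr{C}_{\alpha}(\cdot;\mathscr{A})x$ solves the abstract Cauchy problem $\prescript{C}{}{\mathscr{D}^{\alpha}_{0_{+}}}u=\mathscr{A}u$, $u(0)=x$, $u'(0)=0$. Applying $\mathcal{I}_{t}^{\alpha}$ to this problem (equivalently, unwinding the definition of the generator) gives, for $x\in\mathcal{D}(\mathscr{A})$,
\[
\mathscr{C}_{\alpha}(t;\mathscr{A})x = x + \mathscr{A}\int_{0}^{t}g_{\alpha}(t-s)\mathscr{C}_{\alpha}(s;\mathscr{A})x\,\mathrm{d}s .
\]
I would then take Laplace transforms of both sides. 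Using the convolution theorem together with $\int_{0}^{\infty}e^{-\lambda t}g_{\alpha}(t)\,\mathrm{d}t=\lambda^{-\alpha}$ and the closedness of $\mathscr{A}$ (to extract $\mathscr{A}$ from both the inner integral and the transform), the right-hand side becomes $\lambda^{-1}x+\lambda^{-\alpha}\mathscr{A}\widehat{\mathscr{C}}(\lambda)x$. Rearranging yields
\[
(\lambda^{\alpha}\mathscr{I}-\mathscr{A})\widehat{\mathscr{C}}(\lambda)x = \lambda^{\alpha-1}x .
\]

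To conclude \eqref{kosinus} I must upgrade this to the statements $\lambda^{\alpha}\in\rho(\mathscr{A})$ and $\widehat{\mathscr{C}}(\lambda)=\lambda^{\alpha-1}\mathcal{R}(\lambda^{\alpha};\mathscr{A})$. Surjectivity of $\lambda^{\alpha}\mathscr{I}-\mathscr{A}$ follows from the displayed identity once it is extended from $\mathcal{D}(\mathscr{A})$ to all of $\mathcal{X}$ by density and the boundedness of $\widehat{\mathscr{C}}(\lambda)$, invoking closedness of $\mathscr{A}$. For injectivity I would use the standard fact that $\mathscr{A}$ commutes with $\mathscr{C}_{\alpha}(s;\mathscr{A})$ on $\mathcal{D}(\mathscr{A})$, hence with $\widehat{\mathscr{C}}(\lambda)$, to obtain the companion identity $\widehat{\mathscr{C}}(\lambda)(\lambda^{\alpha}\mathscr{I}-\mathscr{A})x=\lambda^{\alpha-1}x$ for $x\in\mathcal{D}(\mathscr{A})$. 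Together these show $\lambda^{\alpha}\mathscr{I}-\mathscr{A}$ is a bijection with bounded inverse $\lambda^{1-\alpha}\widehat{\mathscr{C}}(\lambda)$, which is precisely \eqref{kosinus}.

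Finally, the sine relation and \eqref{sinus} are deduced from \eqref{kosinus} with no further analytic input. Since $\mathscr{S}_{\alpha}(t;\mathscr{A})x=\int_{0}^{t}\mathscr{C}_{\alpha}(s;\mathscr{A})x\,\mathrm{d}s=(g_{1}\ast\mathscr{C}_{\alpha}(\cdot;\mathscr{A})x)(t)$, the integration rule for the Laplace transform gives $\int_{0}^{\infty}e^{-\lambda t}\mathscr{S}_{\alpha}(t;\mathscr{A})x\,\mathrm{d}t=\lambda^{-1}\widehat{\mathscr{C}}(\lambda)x=\lambda^{\alpha-2}\mathcal{R}(\lambda^{\alpha};\mathscr{A})x$. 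Likewise $\mathscr{T}_{\alpha}(t;\mathscr{A})x=(g_{\alpha-1}\ast\mathscr{C}_{\alpha}(\cdot;\mathscr{A})x)(t)$ by \eqref{Talpha}, so the convolution theorem with $\int_{0}^{\infty}e^{-\lambda t}g_{\alpha-1}(t)\,\mathrm{d}t=\lambda^{-(\alpha-1)}$ gives $\int_{0}^{\infty}e^{-\lambda t}\mathscr{T}_{\alpha}(t;\mathscr{A})x\,\mathrm{d}t=\lambda^{-(\alpha-1)}\widehat{\mathscr{C}}(\lambda)x=\mathcal{R}(\lambda^{\alpha};\mathscr{A})x$, establishing \eqref{sinus}. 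I expect the main obstacle to be the careful justification of interchanging the closed unbounded operator $\mathscr{A}$ with the Laplace integral, and the passage from the dense-domain identity to genuine invertibility on all of $\mathcal{X}$; by contrast the convergence and convolution manipulations are routine given \eqref{exp}.
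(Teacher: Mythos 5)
The paper never proves this theorem: it is imported verbatim from the cited reference \cite{kexue1} as a preliminary, so there is no internal proof to compare your argument against, and I can only assess your proposal on its own merits and against the standard literature argument. Your overall strategy --- Laplace-transforming a Volterra-type identity for $\mathscr{C}_{\alpha}$, extracting $(\lambda^{\alpha}\mathscr{I}-\mathscr{A})\widehat{\mathscr{C}}(\lambda)x=\lambda^{\alpha-1}x$, upgrading this to $\lambda^{\alpha}\in\rho(\mathscr{A})$ and the resolvent formula, and then obtaining the $\mathscr{S}_{\alpha}$ and $\mathscr{T}_{\alpha}$ relations from the convolution theorem with $\widehat{g_{1}}(\lambda)=\lambda^{-1}$ and $\widehat{g_{\alpha-1}}(\lambda)=\lambda^{1-\alpha}$ --- is exactly the route taken in that literature, and the resolvent algebra, the density-plus-closedness extension, and the injectivity argument via the companion identity are all sound. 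It is also to your credit that you prove $\lambda^{\alpha}\in\rho(\mathscr{A})$ rather than assuming it, which is the correct reading of the statement.

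The genuine gap sits at what you call the ``key input.'' The paper's definition of a fractional strongly continuous cosine family is purely algebraic: strong continuity, $\mathscr{C}_{\alpha}(0;\mathscr{A})=\mathscr{I}$, commutativity, and the functional equation
\begin{equation*}
\mathscr{C}_{\alpha}(s;\mathscr{A})\mathcal{I}_{t}^{\alpha}\mathscr{C}_{\alpha}(t;\mathscr{A})-\mathcal{I}_{s}^{\alpha}\mathscr{C}_{\alpha}(s;\mathscr{A})\mathscr{C}_{\alpha}(t;\mathscr{A})=\mathcal{I}_{t}^{\alpha}\mathscr{C}_{\alpha}(t;\mathscr{A})-\mathcal{I}_{s}^{\alpha}\mathscr{C}_{\alpha}(s;\mathscr{A}),
\end{equation*}
with $\mathscr{A}$ defined only as the pointwise limit $\Gamma(\alpha+1)\lim_{t\to0_{+}}t^{-\alpha}\left(\mathscr{C}_{\alpha}(t;\mathscr{A})x-x\right)$. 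Nothing in this definition says that $\mathscr{C}_{\alpha}(\cdot;\mathscr{A})x$ solves the fractional Cauchy problem, and the Volterra equation $\mathscr{C}_{\alpha}(t;\mathscr{A})x=x+\mathscr{A}\left(g_{\alpha}\ast\mathscr{C}_{\alpha}(\cdot;\mathscr{A})x\right)(t)$ does not follow by merely ``unwinding the definition of the generator'': passing from a limit at $t=0_{+}$ together with the functional equation to a global-in-time integral equation is the substantive content of the cited theorem --- it is, in essence, what Li proves. The same applies to the two facts you invoke as standard, namely density of $\mathcal{D}(\mathscr{A})$ and the commutation $\mathscr{A}\mathscr{C}_{\alpha}(t;\mathscr{A})x=\mathscr{C}_{\alpha}(t;\mathscr{A})\mathscr{A}x$ on $\mathcal{D}(\mathscr{A})$; both must themselves be extracted from the functional equation. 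So you have correctly reduced the theorem to routine Laplace-transform algebra, but the reduction assumes precisely the non-routine part. Contrary to your closing remark, the main obstacle is not interchanging the closed operator $\mathscr{A}$ with the Laplace integral (Hille's theorem plus the exponential bounds handles that), but deriving the Volterra equation, the density, and the commutation from the paper's actual definition.
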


	\section{Main results}\label{pertub}
	In many concrete situations, the fractional evolution equation is given as a sum of several terms that have various physical meanings and various mathematical properties. While the mathematical analysis for each term may be straightforward, it is not entirely clear what happens after the summation. In the context of perturbed generators of fractional cosine families, we take this as a starting point. To study some perturbation results for fractional strongly continuous cosine families, we first prove the following auxiliary lemma, which plays an important role in the proof of Theorem \ref{thm1}, and this lemma establishes a connection between the resolvents of cosine (sine) families generated by $\mathscr{A}$ and $\mathscr{A}+\mathscr{B}$, respectively.
	\begin{lemma}\label{lem}
		Let $\mathscr{A}$ be a closed linear operator on $\mathcal{X}$ to $\mathcal{X}$ and suppose $\mathscr{B}\in \mathcal{L}(\mathcal{X})$ is such that $\norm{\mathscr{B}\mathcal{R}( \lambda^{\alpha};\mathscr{A})} =\theta<1$ for some $\lambda^{\alpha}\in \rho(\mathscr{A})$, $\alpha \in (1,2]$. Then, $\mathscr{A}+\mathscr{B}$ is a closed linear operator with domain $\mathcal{D}(\mathscr{A})$ and $\mathcal{R}( \lambda^{\alpha};\mathscr{A}+\mathscr{B})$ exists and 
			\begin{equation}\label{star}
				\mathcal{R}( \lambda^{\alpha};\mathscr{A}+\mathscr{B})=\sum_{n=0}^{\infty}\mathcal{R}( \lambda^{\alpha};\mathscr{A})\Big[\mathscr{B}\mathcal{R}( \lambda^{\alpha};\mathscr{A})\Big]^{n}.
			\end{equation}
			
			Furthermore, 
			\begin{equation}
				\norm{\mathcal{R}( \lambda^{\alpha};\mathscr{A}+\mathscr{B})-\mathcal{R}( \lambda^{\alpha};\mathscr{A})} \leq \norm{\mathcal{R}( \lambda^{\alpha};\mathscr{A})}\theta (1-\theta)^{-1}.
			\end{equation}
		\end{lemma}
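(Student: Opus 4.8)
The plan is to reduce the whole statement to the algebraic factorization of the operator $\lambda^{\alpha}\mathscr{I}-(\mathscr{A}+\mathscr{B})$ combined with the classical Neumann series for a bounded operator of norm strictly less than one. First I would settle the structural claim that $\mathscr{A}+\mathscr{B}$, taken with domain $\mathcal{D}(\mathscr{A})$, is again a closed linear operator. Since $\mathscr{B}\in\mathcal{L}(\mathcal{X})$ is defined on all of $\mathcal{X}$, closedness transfers from $\mathscr{A}$: if $x_{n}\to x$ with $x_{n}\in\mathcal{D}(\mathscr{A})$ and $(\mathscr{A}+\mathscr{B})x_{n}\to y$, then $\mathscr{A}x_{n}=(\mathscr{A}+\mathscr{B})x_{n}-\mathscr{B}x_{n}\to y-\mathscr{B}x$ by continuity of $\mathscr{B}$, and closedness of $\mathscr{A}$ forces $x\in\mathcal{D}(\mathscr{A})$ with $\mathscr{A}x=y-\mathscr{B}x$, that is, $(\mathscr{A}+\mathscr{B})x=y$.

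Next, on $\mathcal{D}(\mathscr{A})$ I would record the factorization
\begin{equation*}
\lambda^{\alpha}\mathscr{I}-(\mathscr{A}+\mathscr{B})=\Big[\mathscr{I}-\mathscr{B}\mathcal{R}(\lambda^{\alpha};\mathscr{A})\Big]\big(\lambda^{\alpha}\mathscr{I}-\mathscr{A}\big),
\end{equation*}
which is immediate once one observes that $\mathscr{B}\mathcal{R}(\lambda^{\alpha};\mathscr{A})(\lambda^{\alpha}\mathscr{I}-\mathscr{A})x=\mathscr{B}x$ for every $x\in\mathcal{D}(\mathscr{A})$. Because $\norm{\mathscr{B}\mathcal{R}(\lambda^{\alpha};\mathscr{A})}=\theta<1$, the first factor is invertible in $\mathcal{L}(\mathcal{X})$, its inverse being the norm-convergent Neumann series $\sum_{n=0}^{\infty}\big[\mathscr{B}\mathcal{R}(\lambda^{\alpha};\mathscr{A})\big]^{n}$ with $\norm{[\mathscr{I}-\mathscr{B}\mathcal{R}(\lambda^{\alpha};\mathscr{A})]^{-1}}\leq(1-\theta)^{-1}$. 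The second factor is a bijection of $\mathcal{D}(\mathscr{A})$ onto $\mathcal{X}$ with bounded inverse $\mathcal{R}(\lambda^{\alpha};\mathscr{A})$, since $\lambda^{\alpha}\in\rho(\mathscr{A})$.

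Composing the two inverses in the order dictated by $(\mathscr{P}\mathscr{Q})^{-1}=\mathscr{Q}^{-1}\mathscr{P}^{-1}$ then shows that $\lambda^{\alpha}\mathscr{I}-(\mathscr{A}+\mathscr{B})$ is a bijection from $\mathcal{D}(\mathscr{A})$ onto $\mathcal{X}$, so $\lambda^{\alpha}\in\rho(\mathscr{A}+\mathscr{B})$ and
\begin{equation*}
\mathcal{R}(\lambda^{\alpha};\mathscr{A}+\mathscr{B})=\mathcal{R}(\lambda^{\alpha};\mathscr{A})\Big[\mathscr{I}-\mathscr{B}\mathcal{R}(\lambda^{\alpha};\mathscr{A})\Big]^{-1}=\mathcal{R}(\lambda^{\alpha};\mathscr{A})\sum_{n=0}^{\infty}\Big[\mathscr{B}\mathcal{R}(\lambda^{\alpha};\mathscr{A})\Big]^{n},
\end{equation*}
which is exactly \eqref{star}. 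For the final estimate I would isolate the $n=0$ term, factor $\mathcal{R}(\lambda^{\alpha};\mathscr{A})$ out of the remaining tail, and bound it by the geometric series $\sum_{n\geq 1}\theta^{n}=\theta(1-\theta)^{-1}$. The one point that needs genuine care rather than routine computation is the domain bookkeeping in inverting the factorization: one must verify that the order of composition is correct, so that $\mathcal{R}(\lambda^{\alpha};\mathscr{A})$ sits on the \emph{left} of the Neumann inverse, and that the resulting operator maps $\mathcal{X}$ into $\mathcal{D}(\mathscr{A})$ and serves as a genuine two-sided inverse of $\lambda^{\alpha}\mathscr{I}-(\mathscr{A}+\mathscr{B})$ rather than only a one-sided one.
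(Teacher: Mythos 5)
Your proof is correct, and it reaches the paper's key formula $\mathcal{R}(\lambda^{\alpha};\mathscr{A}+\mathscr{B})=\mathcal{R}(\lambda^{\alpha};\mathscr{A})\big[\mathscr{I}-\mathscr{B}\mathcal{R}(\lambda^{\alpha};\mathscr{A})\big]^{-1}$ through the same Neumann-series mechanism, but the logical organization is genuinely different. The paper proceeds in the opposite direction: it \emph{defines} the candidate $\mathcal{R}\equiv\sum_{n=0}^{\infty}\mathcal{R}(\lambda^{\alpha};\mathscr{A})\big[\mathscr{B}\mathcal{R}(\lambda^{\alpha};\mathscr{A})\big]^{n}$ and then verifies it is an inverse in two separate steps --- first the right-inverse identity $\big[\lambda^{\alpha}\mathscr{I}-(\mathscr{A}+\mathscr{B})\big]\mathcal{R}=\mathscr{I}$ by direct multiplication, and then the left-inverse property by a range argument (the range of $\mathcal{R}$ is exactly $\mathcal{D}(\mathscr{A})$, so any $x\in\mathcal{D}(\mathscr{A})$ can be written as $x=\mathcal{R}y$, and left inversion follows from right inversion). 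You instead factor the operator to be inverted, $\lambda^{\alpha}\mathscr{I}-(\mathscr{A}+\mathscr{B})=\big[\mathscr{I}-\mathscr{B}\mathcal{R}(\lambda^{\alpha};\mathscr{A})\big]\big(\lambda^{\alpha}\mathscr{I}-\mathscr{A}\big)$, and invert the product of two bijections. What your route buys is that the two-sided-inverse and domain bookkeeping come for free: a composition of bijections is a bijection, so no separate left-inverse or range argument is needed, and the membership $\lambda^{\alpha}\in\rho(\mathscr{A}+\mathscr{B})$ is transparent. You also supply the closedness argument for $\mathscr{A}+\mathscr{B}$ that the paper dismisses as obvious. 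What the paper's route buys is directness: one sees the series itself annihilated against $\lambda^{\alpha}\mathscr{I}-(\mathscr{A}+\mathscr{B})$ without needing to set up the factorization identity first. The final estimate is handled identically in both: drop the $n=0$ term and sum the geometric tail to get $\norm{\mathcal{R}(\lambda^{\alpha};\mathscr{A})}\,\theta(1-\theta)^{-1}$.
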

	
\begin{proof}
	Obviously, $\mathscr{A}+\mathscr{B}$ is a closed linear operator with domain $\mathcal{D}(\mathscr{A})$. Since $\norm{\mathscr{B}\mathcal{R}( \lambda^{\alpha};\mathscr{A})} =\theta<1$, we note that
	\begin{equation*}
		\mathcal{R}\equiv\sum_{n=0}^{\infty}\mathcal{R}( \lambda^{\alpha};\mathscr{A})\Big[\mathscr{B}\mathcal{R}( \lambda^{\alpha};\mathscr{A})\Big]^{n}=\mathcal{R}( \lambda^{\alpha};\mathscr{A})\Big[\mathscr{I}-\mathscr{B}\mathcal{R}( \lambda^{\alpha};\mathscr{A})\Big]^{-1},
	\end{equation*}
and hence that
\begin{equation*}
	\Big[\lambda^{\alpha}\mathscr{I}-(\mathscr{A}+\mathscr{B})\Big]\mathcal{R}=\Big[\mathscr{I}-\mathscr{B}\mathcal{R}( \lambda^{\alpha};\mathscr{A})\Big]\Big[\mathscr{I}-\mathscr{B}\mathcal{R}( \lambda^{\alpha};\mathscr{A})\Big]^{-1}=\mathscr{I}.
\end{equation*}
Moreover, the range of $\mathcal{R}$ is precisely $\mathcal{D}(\mathscr{A})$ since the range of $\Big[\mathscr{I}-\mathscr{B}\mathcal{R}( \lambda^{\alpha};\mathscr{A})\Big]^{-1}$ is $\mathcal{X}$.
Therefore, given $x\in \mathcal{D}(\mathscr{A})$ there exists a $y$ such that $x=\mathcal{R}y$.
Therefore, we attain that
\begin{equation*}
\mathcal{R}\Big[\lambda^{\alpha}\mathscr{I}-(\mathscr{A}+\mathscr{B})\Big] x=\mathcal{R}\Big[\lambda^{\alpha}\mathscr{I}-(\mathscr{A}+\mathscr{B})\Big]\mathcal{R}y=\mathcal{R}y=x,
\end{equation*}
so that $\mathcal{R}$ is both a left and a right inverse of $\Big[\lambda^{\alpha}\mathscr{I}-(\mathscr{A}+\mathscr{B})\Big]$. 
The bound $\norm{\mathcal{R}( \lambda^{\alpha};\mathscr{A}+\mathscr{B})-\mathcal{R}( \lambda^{\alpha};\mathscr{A})} $ comes directly from the expansion \eqref{star}:
\small{
\begin{align*}
	\norm{\mathcal{R}( \lambda^{\alpha};\mathscr{A}+\mathscr{B})-\mathcal{R}( \lambda^{\alpha};\mathscr{A})}&=\norm{\sum_{n=1}^{\infty}\mathcal{R}( \lambda^{\alpha};\mathscr{A})\Big[\mathscr{B}\mathcal{R}( \lambda^{\alpha};\mathscr{A})\Big]^{n}}\\&\leq \norm{\mathcal{R}( \lambda^{\alpha};\mathscr{A})}\sum_{n=0}^{\infty}\norm{\mathscr{B}\mathcal{R}( \lambda^{\alpha};\mathscr{A})}^{n}\norm{\mathscr{B}\mathcal{R}( \lambda^{\alpha};\mathscr{A})}\\
	 &\leq \norm{\mathbb{R}( \lambda^{\alpha};\mathscr{A})}\Big(1-\norm{\mathscr{B}\mathcal{R}( \lambda^{\alpha};\mathscr{A})}\Big)^{-1}\norm{\mathscr{B}\mathcal{R}( \lambda^{\alpha};\mathscr{A})}\\&=\norm{\mathcal{R}( \lambda^{\alpha};\mathscr{A})} \theta (1-\theta)^{-1}.
\end{align*}
}
\end{proof}
\begin{corollary}\label{corol}
	Let $\mathscr{A}$ be a closed linear operator on $\mathcal{X}$ to $\mathcal{X}$ and suppose $\mathscr{B}\in \mathcal{L}(\mathcal{X})$ is such that $\norm{\mathscr{B}\mathcal{R}( \lambda^{\alpha};\mathscr{A})} =\theta<1$ for some $\lambda^{\alpha}\in \rho(\mathscr{A})$, $\alpha \in (1,2]$. Then, $\mathscr{A}+\mathscr{B}$ is a closed linear operator with domain $\mathcal{D}(\mathscr{A})$ and $\lambda^{\alpha-1}\mathcal{R}( \lambda^{\alpha};\mathscr{A}+\mathscr{B})$ exist and 
\begin{equation}\label{star-1}
	\lambda^{\alpha-1}\mathcal{R}( \lambda^{\alpha};\mathscr{A}+\mathscr{B})=\sum_{n=0}^{\infty}\lambda^{\alpha-1}\mathcal{R}( \lambda^{\alpha};\mathscr{A})\Big[\mathscr{B}\mathcal{R}( \lambda^{\alpha};\mathscr{A})\Big]^{n}.
\end{equation}

Furthermore, 
\begin{equation}
	\norm{\lambda^{\alpha-1}\mathcal{R}( \lambda^{\alpha};\mathscr{A}+\mathscr{B})-\lambda^{\alpha-1}\mathcal{R}( \lambda^{\alpha};\mathscr{A})} \leq \norm{\lambda^{\alpha-1}\mathcal{R}( \lambda^{\alpha};\mathscr{A})}\theta (1-\theta)^{-1}.
\end{equation}	
\end{corollary}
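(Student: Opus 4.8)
The plan is to derive the corollary directly from Lemma \ref{lem}, since the hypotheses are verbatim identical and only a scalar rescaling by $\lambda^{\alpha-1}$ separates the two statements. First I would invoke Lemma \ref{lem} to conclude immediately that $\mathscr{A}+\mathscr{B}$ is a closed linear operator with domain $\mathcal{D}(\mathscr{A})$, that $\mathcal{R}(\lambda^{\alpha};\mathscr{A}+\mathscr{B})$ exists, and that the expansion \eqref{star} holds. This disposes of the closedness and domain assertion at once, with no further argument required.

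Next I would multiply the operator identity \eqref{star} on both sides by the complex scalar $\lambda^{\alpha-1}$. Because $\lambda^{\alpha-1}$ commutes with every bounded operator, it can be pulled inside the sum term by term, producing exactly \eqref{star-1}. No convergence issue arises beyond the one already settled in Lemma \ref{lem}, since multiplying a convergent operator series by a fixed scalar leaves it convergent with the same sum.

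For the norm estimate I would factor $\lambda^{\alpha-1}$ out of both resolvent terms, identify the difference with the tail $\sum_{n\geq 1}$ of the series \eqref{star-1}, and apply the triangle inequality together with submultiplicativity of the operator norm. Extracting the scalar via $\norm{\lambda^{\alpha-1}\mathscr{T}}=|\lambda^{\alpha-1}|\,\norm{\mathscr{T}}$ and summing the geometric series $\sum_{n\geq 0}\theta^{n}=(1-\theta)^{-1}$ yields the bound $\norm{\lambda^{\alpha-1}\mathcal{R}(\lambda^{\alpha};\mathscr{A})}\,\theta(1-\theta)^{-1}$, which is precisely the claimed inequality. This computation is structurally identical to the estimate already carried out in the proof of Lemma \ref{lem}, merely carrying the extra scalar factor throughout.

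I do not anticipate any genuine obstacle: the corollary is a scalar multiple of Lemma \ref{lem}, and the only point meriting care is the bookkeeping of $\lambda^{\alpha-1}$ as a scalar that commutes through the series and factors cleanly out of the operator norm. The reason the statement is isolated as a separate corollary is presumably its downstream role, namely that by \eqref{kosinus} the quantity $\lambda^{\alpha-1}\mathcal{R}(\lambda^{\alpha};\mathscr{A})$ is the Laplace transform of the cosine family $\mathscr{C}_{\alpha}(\cdot;\mathscr{A})$, so \eqref{star-1} is exactly the form in which the perturbation series will feed into the proof of Theorem \ref{thm1}.
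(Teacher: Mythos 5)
Your proof is correct and takes essentially the same route as the paper: indeed, the paper supplies no separate proof of Corollary \ref{corol} at all, treating it exactly as you do, namely as an immediate consequence of Lemma \ref{lem} obtained by multiplying the identity \eqref{star} and the associated norm estimate through by the fixed scalar $\lambda^{\alpha-1}$.
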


In the following theorem, we impose sufficient conditions such that $\mathscr{A}$ is the infinitesimal generator of a fractional strongly continuous cosine (sine) family in $\mathcal{X}$, and $\mathscr{B}$ is a bounded linear operator in $\mathcal{X}$, then $\mathscr{A}+\mathscr{B}$ is also the infinitesimal generator of a fractional strongly continuous cosine (sine) family in $\mathcal{X}$.	

	\begin{theorem}\label{thm1}
		Let $\mathscr{A}$ be the infinitesimal generator of the fractional strongly continuous cosine family $\left\lbrace \mathscr{C}_{\alpha}(t;\mathscr{A}): t \in \mathbb{R_{+}}\right\rbrace $ with  \eqref{exp}, and let $\left\lbrace \mathscr{S}_{\alpha}(t;\mathscr{A}): t \in \mathbb{R_{+}}\right\rbrace $ denote the fractional strongly continuous sine family associated with $\mathscr{C}_{\alpha}$ where $\alpha \in (1,2]$. Furthermore, suppose that $\mathscr{B}\in \mathcal{L}(\mathcal{X})$.  Then, fractional strongly continuous families of linear bounded operators $\mathscr{C}_{\alpha}(\cdot;\mathscr{A}+\mathscr{B}),\mathscr{S}_{\alpha}(\cdot;\mathscr{A}+\mathscr{B})\in \mathcal{L}(\mathcal{X})$ generated by $\mathscr{A}+\mathscr{B}$ (defined on $\mathcal{D}(\mathscr{A})$) can be represented by the series expansion:
			\begin{align}
				&\mathscr{C}_{\alpha}(t;\mathscr{A}+\mathscr{B})x\coloneqq \sum_{n=0}^{\infty}\mathscr{C}_{\alpha,n}(t;\mathscr{A})x, \quad x \in \mathcal{X},\label{Cdefine}\\
				&\mathscr{S}_{\alpha}(t;\mathscr{A}+\mathscr{B})x\coloneqq \sum_{n=0}^{\infty}\mathscr{S}_{\alpha,n}(t;\mathscr{A})x, \quad x\in \mathcal{X}\label{Sdefine},
			\end{align}
			where for $t\in \mathbb{R_{+}}$, $x\in \mathcal{X}$ and $n \in \mathbb{N}$,  
			\begin{align}
				&\mathscr{C}_{\alpha,0}(t;\mathscr{A})x\coloneqq \mathscr{C}_{\alpha}(t;\mathscr{A})x,\quad \mathscr{S}_{\alpha,0}(t;\mathscr{A})x\coloneqq \mathscr{S}_{\alpha}(t;\mathscr{A})x,\nonumber\\
				&\mathscr{C}_{\alpha,n}(t;\mathscr{A})x\coloneqq \int_{0}^{t}\mathscr{C}_{\alpha}(t-s;\mathscr{A})\mathscr{B}\mathscr{S}_{\alpha,n-1}(s;\mathscr{A})x\mathrm{d}s,\label{Calphan}\\
				&\mathscr{S}_{\alpha,n}(t;\mathscr{A})x\coloneqq \int_{0}^{t}\mathscr{T}_{\alpha}(t-s;\mathscr{A})\mathscr{B}\mathscr{S}_{\alpha,n-1}(s;\mathscr{A})x\mathrm{d}s\label{Salphan}.
			\end{align}
	\end{theorem}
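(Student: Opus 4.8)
The plan is to carry the classical Phillips--Travis--Webb scheme through the Laplace transform, using the resolvent expansion already secured in Lemma~\ref{lem} and Corollary~\ref{corol}. Two independent tasks must be completed. First, I would show that the two series \eqref{Cdefine}--\eqref{Sdefine} converge in $\mathcal{L}(\mathcal{X})$ uniformly on compact $t$-intervals, so that they define exponentially bounded, strongly continuous families with $\mathscr{C}_{\alpha}(0;\mathscr{A}+\mathscr{B})=\mathscr{I}$. Second, I would identify the generator of these families as $\mathscr{A}+\mathscr{B}$ by computing their Laplace transforms and appealing to uniqueness of the transform together with the characterization \eqref{kosinus}--\eqref{sinus} read for the operator $\mathscr{A}+\mathscr{B}$.

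For the convergence I would extract termwise majorants from the convolution structure of \eqref{Calphan}--\eqref{Salphan} and the semigroup property \eqref{galpha}. From \eqref{exp} and the definitions of $\mathscr{S}_{\alpha}$ and $\mathscr{T}_{\alpha}$ one gets immediately $\norm{\mathscr{S}_{\alpha}(t;\mathscr{A})}\le Me^{\omega t}g_{2}(t)$ and $\norm{\mathscr{T}_{\alpha}(t;\mathscr{A})}\le Me^{\omega t}g_{\alpha}(t)$, the latter because $\int_{0}^{t}g_{\alpha-1}(t-s)\,\mathrm{d}s=g_{\alpha}(t)$. Feeding these into \eqref{Salphan} and \eqref{Calphan} and iterating, the convolution identity $g_{\mu}\ast g_{\nu}=g_{\mu+\nu}$ yields by induction
\begin{equation*}
\norm{\mathscr{S}_{\alpha,n}(t;\mathscr{A})}\le M^{n+1}\norm{\mathscr{B}}^{n}e^{\omega t}g_{n\alpha+2}(t),\qquad \norm{\mathscr{C}_{\alpha,n}(t;\mathscr{A})}\le M^{n+1}\norm{\mathscr{B}}^{n}e^{\omega t}g_{(n-1)\alpha+3}(t).
\end{equation*}
The associated majorant series are two-parameter Mittag-Leffler functions of $t$, hence finite and locally uniformly convergent; a uniform limit of strongly continuous terms is strongly continuous, and the bounds pass to the sums, which settles the first task.

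To identify the generator I would transform termwise, which the locally uniform bounds above justify (dominated convergence for $\operatorname{Re}\lambda$ large). Writing $R\coloneqq\mathcal{R}(\lambda^{\alpha};\mathscr{A})$, the convolution theorem applied to \eqref{Salphan} together with the transforms in \eqref{kosinus}--\eqref{sinus} gives $\widehat{\mathscr{S}_{\alpha,n}}(\lambda)=\lambda^{\alpha-2}R(\mathscr{B}R)^{n}$, so that summation and Corollary~\ref{corol} produce exactly $\lambda^{\alpha-2}\mathcal{R}(\lambda^{\alpha};\mathscr{A}+\mathscr{B})$; by uniqueness and the sine transform of \eqref{kosinus}--\eqref{sinus} read for $\mathscr{A}+\mathscr{B}$, the series \eqref{Sdefine} is the sine family of $\mathscr{A}+\mathscr{B}$. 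This half of the identification is clean.

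The cosine series is where I expect the real difficulty, and it is the step I would check most carefully. Transforming \eqref{Calphan} termwise gives $\widehat{\mathscr{C}_{\alpha,n}}(\lambda)=\widehat{\mathscr{C}_{\alpha}}\,\mathscr{B}\,\widehat{\mathscr{S}_{\alpha,n-1}}=\lambda^{2\alpha-3}R(\mathscr{B}R)^{n}$ for $n\ge1$, and this matches the factor $\lambda^{\alpha-1}R(\mathscr{B}R)^{n}$ demanded by $\lambda^{\alpha-1}\mathcal{R}(\lambda^{\alpha};\mathscr{A}+\mathscr{B})$ only when $\alpha=2$. Thus the bookkeeping available for the sine family does not transfer verbatim, and for general $\alpha$ I would instead tie the cosine family to the sine family through \eqref{AA}, taking $\mathscr{C}_{\alpha}(t;\mathscr{A}+\mathscr{B})=\mathscr{D}^{1}_{t}\mathscr{S}_{\alpha}(t;\mathscr{A}+\mathscr{B})$, whose transform is $\lambda\cdot\lambda^{\alpha-2}\mathcal{R}(\lambda^{\alpha};\mathscr{A}+\mathscr{B})=\lambda^{\alpha-1}\mathcal{R}(\lambda^{\alpha};\mathscr{A}+\mathscr{B})$, exactly as required by \eqref{kosinus}. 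Equivalently, the recursion producing the correct transform convolves $\mathscr{T}_{\alpha}$ rather than $\mathscr{C}_{\alpha}$ against $\mathscr{B}\mathscr{C}_{\alpha,n-1}$, giving $\widehat{\mathscr{C}_{\alpha,n}}=R\,\mathscr{B}\,\widehat{\mathscr{C}_{\alpha,n-1}}=\lambda^{\alpha-1}R(\mathscr{B}R)^{n}$. Reconciling \eqref{Calphan} with this requirement, and only then invoking uniqueness of the transform, is the crux of the argument.
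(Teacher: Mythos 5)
Your proposal follows exactly the paper's own strategy: the inductive majorants you obtain from \eqref{galpha} are the paper's \eqref{formula-1}--\eqref{formula-2}, the locally uniform convergence and Mittag-Leffler bounds are \eqref{3}--\eqref{3'}, and the identification of the sine family via Fubini, the transform \eqref{sinus}, Lemma \ref{lem} and uniqueness of the Laplace transform is precisely the paper's derivation of \eqref{bir} and \eqref{iki}. (One small slip in the paper that your version corrects: the transform of the sine family of $\mathscr{A}+\mathscr{B}$ must carry the factor $\lambda^{\alpha-2}$, i.e. $\widehat{\mathscr{S}_{\alpha,n}}(\lambda)=\lambda^{\alpha-2}\mathcal{R}(\lambda^{\alpha};\mathscr{A})\big[\mathscr{B}\mathcal{R}(\lambda^{\alpha};\mathscr{A})\big]^{n}$, whereas \eqref{iki} and the iterated computation preceding it both omit this factor; since it is omitted on both sides, the sine conclusion survives.)

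The ``crux'' you flagged in the cosine half is not a defect of your argument but a genuine error in the paper. The paper handles the cosine case by asserting \eqref{bir-1} ``in a similar way,'' but \eqref{bir-1} does not follow from \eqref{Calphan}: as printed it is even missing the factor $\mathscr{B}$, and the convolution under the integral in \eqref{Calphan} involves $\mathscr{B}\mathscr{S}_{\alpha,n-1}$, not $\mathscr{B}\mathscr{C}_{\alpha,n-1}$. The convolution theorem together with \eqref{kosinus} gives, exactly as you computed,
\begin{equation*}
\widehat{\mathscr{C}_{\alpha,n}}(\lambda)=\lambda^{\alpha-1}\mathcal{R}(\lambda^{\alpha};\mathscr{A})\,\mathscr{B}\,\widehat{\mathscr{S}_{\alpha,n-1}}(\lambda)=\lambda^{2\alpha-3}\mathcal{R}(\lambda^{\alpha};\mathscr{A})\Big[\mathscr{B}\mathcal{R}(\lambda^{\alpha};\mathscr{A})\Big]^{n},\qquad n\geq 1,
\end{equation*}
and $\lambda^{2\alpha-3}=\lambda^{\alpha-1}$ only when $\alpha=2$. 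Consequently the series \eqref{Cdefine} built from \eqref{Calphan} has transform $\lambda^{\alpha-1}\mathcal{R}(\lambda^{\alpha};\mathscr{A})+\lambda^{2\alpha-3}\sum_{n\geq 1}\mathcal{R}(\lambda^{\alpha};\mathscr{A})\big[\mathscr{B}\mathcal{R}(\lambda^{\alpha};\mathscr{A})\big]^{n}$, which is not $\lambda^{\alpha-1}\mathcal{R}(\lambda^{\alpha};\mathscr{A}+\mathscr{B})$ for $\alpha\in(1,2)$; so \eqref{iki-2}, and with it the cosine assertion of Theorem \ref{thm1}, fails for the recursion \eqref{Calphan} except at $\alpha=2$. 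Your repair is the correct one: either set $\mathscr{C}_{\alpha}(t;\mathscr{A}+\mathscr{B})\coloneqq\mathscr{D}^{1}_{t}\mathscr{S}_{\alpha}(t;\mathscr{A}+\mathscr{B})$ via \eqref{AA}, or replace \eqref{Calphan} by $\mathscr{C}_{\alpha,n}(t;\mathscr{A})x=\int_{0}^{t}\mathscr{T}_{\alpha}(t-s;\mathscr{A})\mathscr{B}\mathscr{C}_{\alpha,n-1}(s;\mathscr{A})x\,\mathrm{d}s$, whose transform is $\lambda^{\alpha-1}\mathcal{R}(\lambda^{\alpha};\mathscr{A})\big[\mathscr{B}\mathcal{R}(\lambda^{\alpha};\mathscr{A})\big]^{n}$ as required; both coincide with the Travis--Webb recursion of Theorem \ref{class} at $\alpha=2$, where $\mathscr{T}_{2}=\mathscr{S}$, so the classical statement is unaffected. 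The only point to add to your sketch is the routine verification that the corrected cosine terms admit majorants analogous to \eqref{formula-2} (they do, by the same $g_{\mu}\ast g_{\nu}=g_{\mu+\nu}$ induction, with $g_{n\alpha+1}$ in place of $g_{(n-1)\alpha+3}$), so that termwise transforming remains justified.
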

	\begin{proof}
	 From \eqref{exp}, it follows that $\norm{\mathscr{S}_{\alpha}(t;\mathscr{A})}\leq Mte^{\omega t}$. This implies that $\mathscr{C}_{\alpha}(t;\mathscr{A})$ and $\mathscr{S}_{\alpha}(t;\mathscr{A})$ are strongly continuous on $\mathbb{R_{+}}$. Moreover, by using the formula \eqref{Talpha}, we obtain: 
	 \begin{equation}\label{A}
	 	\norm{\mathscr{T}_{\alpha}(t;\mathscr{A})}\leq \int_{0}^{t}g_{\alpha-1}(t-s)\norm{\mathscr{C}_{\alpha}(s;\mathscr{A})}\mathrm{d}s\leq Me^{\omega t}\int_{0}^{t}\frac{(t-s)^{\alpha-2}}{\Gamma(\alpha-1)}\mathrm{d}s=Me^{\omega t}g_{\alpha}(t), \quad t\in \mathbb{R_{+}}.
	 \end{equation}
	 
	  Then, we suppose  $\mathscr{S}_{\alpha,n}(t;\mathscr{A})$ and $\mathscr{C}_{\alpha,n}(t;\mathscr{A})$ are strongly continuous on $\mathbb{R_{+}}$ and it is true that for $n \in N_{0}$:
	 
	 \begin{align}\label{formula-1}
	 	&\norm{\mathscr{S}_{\alpha,n}(t;\mathscr{A})}\leq M^{n+1} \norm{\mathscr{B}}^{n}e^{\omega t}g_{n\alpha+2}(t),\\
	 	&\norm{\mathscr{C}_{\alpha,n}(t;\mathscr{A})}\leq M^{n+1} \norm{\mathscr{B}}^{n}e^{\omega t}g_{(n-1)\alpha+3}(t).
	 	\label{formula-2}
	 \end{align}

		Firstly, we start with the formula \eqref{formula-1}. Then, $\mathscr{T}_{\alpha}(t-s;\mathscr{A})\mathscr{B}\mathscr{S}_{\alpha,n}(s;\mathscr{A})$ will be strongly continuous on $[0,t]$ such that the integral defining $\mathscr{S}_{\alpha,n+1}(t;\mathscr{A})$ exists in the strong topology. Moreover, 	 
			in the case of \eqref{formula-1}, this is true by our remark above for $n=0$. By mathematical induction principle, we verify \cite{A-H-M}:
			
			\begin{align}
				\norm{\mathscr{S}_{\alpha,n+1}(t;\mathscr{A})}&\leq \int_{0}^{t}\norm{\mathscr{T}_{\alpha}(t-s;\mathscr{A})}\norm{\mathscr{B}}\norm{\mathscr{S}_{\alpha,n}(s;\mathscr{A})} \mathrm{d}s\nonumber\\
				&\leq M^{n+2}\norm{\mathscr{B}}^{n+1}\int_{0}^{t}g_{\alpha}(t-s)
				e^{w(t-s)}e^{\omega s}g_{n\alpha+2}(s)\mathrm{d}s\nonumber\\
				&=M^{n+2}\norm{\mathscr{B}}^{n+1}e^{\omega t}g_{(n+1)\alpha+2}(t), \quad t \in \mathbb{R_{+}}.
			\end{align}
			
			The analogues procedure gives the bound for $\mathscr{C}_{\alpha,n}(t;\mathscr{A})$, $t\in \mathbb{R_{+}}$, as follows:
			\allowdisplaybreaks
			\begin{align}
				\norm{\mathscr{C}_{\alpha,n+1}(t;\mathscr{A})}&\leq \int_{0}^{t}\norm{\mathscr{C}_{\alpha}(t-s;\mathscr{A})}\norm{\mathscr{B}}\norm{\mathscr{S}_{\alpha,n}(s;\mathscr{A})} \mathrm{d}s\nonumber\\
				&\leq M^{n+2}\norm{\mathscr{B}}^{n+1}\int_{0}^{t}
				e^{w(t-s)}e^{\omega s}g_{n\alpha+2}(s)\mathrm{d}s\nonumber\\
				&=M^{n+2}\norm{\mathscr{B}}^{n+1}e^{\omega t}g_{n\alpha+3}(t),  \quad t \in \mathbb{R_{+}}.
			\end{align}
	Finally, for $t_1<t_2$, we have
	\begin{align}\label{first}
		\norm{\mathscr{S}_{\alpha,n+1}(t_2;\mathscr{A})x-\mathscr{S}_{\alpha,n+1}(t_1;\mathscr{A})x}&\leq \int_{0}^{t_1}\norm{\Big[\mathscr{T}_{\alpha}(t_2-s)-\mathscr{T}_{\alpha}(t_1-s)\Big]\mathscr{B}\mathscr{S}_{\alpha,n}(s;\mathscr{A})x}\mathrm{d}s\nonumber\\
		&+\int_{t_1}^{t_2}\norm{\mathscr{T}_{\alpha}(t_2-s)}\norm{\mathscr{B}}\norm{\mathscr{S}_{\alpha,n}(s;\mathscr{A})x}\mathrm{d}s, \quad x \in \mathcal{X},
	\end{align}	

	\begin{align}\label{second}
		\norm{\mathscr{C}_{\alpha,n+1}(t_2;\mathscr{A})x-\mathscr{C}_{\alpha,n+1}(t_1;\mathscr{A})x}&\leq \int_{0}^{t_1}\norm{\Big[\mathscr{C}_{\alpha}(t_2-s)-\mathscr{C}_{\alpha}(t_1-s)\Big]\mathscr{B}\mathscr{S}_{\alpha,n}(s;\mathscr{A})x}\mathrm{d}s\nonumber\\
		&+\int_{t_1}^{t_2}\norm{\mathscr{C}_{\alpha}(t_2-s)}\norm{\mathscr{B}}\norm{\mathscr{S}_{\alpha,n}(s;\mathscr{A})x}\mathrm{d}s, \quad x \in \mathcal{X}.
	\end{align}
As $t_1,t_2 \to t_0$, the integrands in the first terms on the right of \eqref{first},\eqref{second} to zero boundedly and the integrands of the second terms of \eqref{first},\eqref{second} are bounded.  Therefore, $\mathscr{S}_{\alpha,n+1}(t;\mathscr{A})x$, $\mathscr{C}_{\alpha,n+1}(t;\mathscr{A})x$ are strongly continuous on $\mathbb{R_{+}}$ for each fixed $x\in \mathcal{X}$. By induction, $\mathscr{S}_{\alpha,n}(t;\mathscr{A})x$, $\mathscr{C}_{\alpha,n}(t;\mathscr{A})x$ are well-defined, strongly continuous, and satisfying \eqref{formula-1} and \eqref{formula-2}, respectively, for $n\in \mathbb{N}_{0}$.   Hence, 
$\mathscr{S}_{\alpha}(t;\mathscr{A}+\mathscr{B})$, $\mathscr{C}_{\alpha}(t;\mathscr{A}+\mathscr{B})$ are uniformly convergent on compact subsets of $\mathbb{R_{+}}$ with respect to the operator norm topology. Moreover, the series are majorized by the following series expansions \cite{A-H-M}:

			\begin{align}\label{3}
				&\norm{\mathscr{S}_{\alpha}(t;\mathscr{A}+\mathscr{B})} \leq Me^{\omega t}tE_{\alpha,2}\left(M\norm{\mathscr{B}}t^{\alpha} \right), \quad t\in \mathbb{R_{+}},\\
				&\norm{\mathscr{C}_{\alpha}(t;\mathscr{A}+\mathscr{B})} \leq Me^{\omega t}t^{2-\alpha}E_{\alpha,3-\alpha}\left(M\norm{\mathscr{B}}t^{\alpha} \right), \quad t\in \mathbb{R_{+}}
				\label{3'}. 
			\end{align}
	Next, we claim that for $\mathscr{S}_{\alpha,n}(t;\mathscr{A})x$ for $Re(\lambda)>\omega$, $x \in \mathcal{X}$, and $n\in \mathbb{N}$:
	\begin{equation}\label{bir}
		\int_{0}^{\infty}e^{-\lambda t}\mathscr{S}_{\alpha,n}(t;\mathscr{A})x\mathrm{d}t=\mathcal{R}(\lambda^{\alpha};\mathscr{A})\int_{0}^{\infty}e^{-\lambda t}\mathscr{B}\mathscr{S}_{\alpha,n-1}(t;\mathscr{A})x\mathrm{d}t.
	\end{equation}
 The integrals in \eqref{bir} exist by \eqref{formula-1}. With the help of well-known Fubini's theorem for iterated integrals and the relation \eqref{sinus}, we verify that
 \allowdisplaybreaks
 \begin{align*}
 \int_{0}^{\infty}e^{-\lambda t}\mathscr{S}_{\alpha,n}(t;\mathscr{A})x\mathrm{d}t&=
 \int_{0}^{\infty}e^{-\lambda t}\int_{0}^{t}\mathscr{T}_{\alpha}(t-s;\mathscr{A})\mathscr{B}\mathscr{S}_{\alpha,n-1}(s;\mathscr{A})x\mathrm{d}s\mathrm{d}t\\
 &=\int_{0}^{\infty}\int_{0}^{t}e^{-\lambda t}\mathscr{T}_{\alpha}(t-s;\mathscr{A})\mathscr{B}\mathscr{S}_{\alpha,n-1}(s;\mathscr{A})x\mathrm{d}s\mathrm{d}t\\&=\int_{0}^{\infty}\int_{s}^{\infty}e^{-\lambda t}\mathscr{T}_{\alpha}(t-s;\mathscr{A})\mathscr{B}\mathscr{S}_{\alpha,n-1}(s;\mathscr{A})x\mathrm{d}t\mathrm{d}s\\
 &=\int_{0}^{\infty}\int_{0}^{\infty}e^{-\lambda (s+t)}\mathscr{T}_{\alpha}(t;\mathscr{A})\mathscr{B}\mathscr{S}_{\alpha,n-1}(s;\mathscr{A})x\mathrm{d}t\mathrm{d}s\\&=\int_{0}^{\infty}e^{-\lambda s}\int_{0}^{\infty}e^{-\lambda t}\mathscr{T}_{\alpha}(t;\mathscr{A})\mathscr{B}\mathscr{S}_{\alpha,n-1}(s;\mathscr{A})x\mathrm{d}t\mathrm{d}s\\
 &=\int_{0}^{\infty}e^{-\lambda s}\mathcal{R}(\lambda^{\alpha};\mathscr{A})\mathscr{B}\mathscr{S}_{\alpha,n-1}(s;\mathscr{A})x\mathrm{d}s\\&=\mathcal{R}(\lambda^{\alpha};\mathscr{A})\int_{0}^{\infty}e^{-\lambda t}\mathscr{B}\mathscr{S}_{\alpha,n-1}(t;\mathscr{A})x\mathrm{d}t.
\end{align*}
Then, we claim that for $\mathscr{S}_{\alpha}(t;\mathscr{A}+\mathscr{B})x$, $t\in \mathbb{R_{+}}$, for sufficiently large $\lambda$ and $x\in \mathcal{X}$,
\begin{equation}\label{iki}
	\int_{0}^{\infty}e^{-\lambda t}\mathscr{S}_{\alpha}(t;\mathscr{A}+\mathscr{B})x\mathrm{d}t=\mathcal{R}(\lambda^{\alpha};\mathscr{A}+\mathscr{B})x.
\end{equation}
From \eqref{bir}, we infer that for $n\in \mathbb{N}$, $x\in \mathcal{X}$, and $Re(\lambda)>\omega$:

 \begin{align*}
	\int_{0}^{\infty}e^{-\lambda t}\mathscr{S}_{\alpha,n}(t;\mathscr{A})x\mathrm{d}t&=\mathcal{R}(\lambda^{\alpha};\mathscr{A})\mathscr{B}\int_{0}^{\infty}e^{-\lambda t}\mathscr{S}_{\alpha,n-1}(t;\mathscr{A})x\mathrm{d}t\\&=\mathcal{R}(\lambda^{\alpha};\mathscr{A})\mathscr{B}\Big[\mathcal{R}(\lambda^{\alpha};\mathscr{A})\int_{0}^{\infty}e^{-\lambda t}\mathscr{B}\mathscr{S}_{\alpha,n-2}(t;\mathscr{A})x\mathrm{d}t\Big]\\&=\mathcal{R}(\lambda^{\alpha};\mathscr{A})\Big[\mathscr{B}\mathcal{R}(\lambda^{\alpha};\mathscr{A})]\mathscr{B}\int_{0}^{\infty}e^{-\lambda t}S_{\alpha,n-2}(t;\mathscr{A})x\mathrm{d}t\\&=\ldots=\mathcal{R}(\lambda^{\alpha};\mathscr{A})\Big[\mathscr{B}\mathcal{R}(\lambda^{\alpha};\mathscr{A})]^{n}x.
\end{align*}

Then \eqref{iki} follows from \eqref{formula-1}, \eqref{3} and Lemma \ref{lem}.

In a similar way, by using the following formula \eqref{bir-1} and relation \eqref{kosinus} for $Re(\lambda)>\omega$, $x\in \mathcal{X}$ and $n\in \mathbb{N}$:
\begin{equation}\label{bir-1}
	\int_{0}^{\infty}e^{-\lambda t}\mathscr{C}_{\alpha,n}(t;\mathscr{A})x\mathrm{d}t=\lambda^{\alpha-1}\mathcal{R}(\lambda^{\alpha};\mathscr{A})\int_{0}^{\infty}e^{-\lambda t}\mathscr{C}_{\alpha,n-1}(t;\mathscr{A})x\mathrm{d}t,
\end{equation}
we can derive the corresponding result for fractional strongly continuous cosine families: 
\begin{equation}\label{iki-2}
\int_{0}^{\infty}e^{-\lambda t}\mathscr{C}_{\alpha}(t;\mathscr{A}+\mathscr{B})x\mathrm{d}t=\lambda^{\alpha-1}\mathcal{R}(\lambda^{\alpha};\mathscr{A}+\mathscr{B})x.
\end{equation}
Then, \eqref{iki-2} follows from \eqref{formula-2}, \eqref{3'} and Corollary \ref{corol}.
The proof is complete.
\end{proof}

\begin{remark}
Our results coincide with the classical ones whenever $\alpha=2$ \cite{TW3,lutz}.	
It should be note that in the case of $\alpha=2$, $\mathscr{T}_{\alpha}(t;\mathscr{A})$ and $\mathscr{S}_{\alpha}(t;\mathscr{A})$ coincide with the strongly continuous sine function $\mathscr{S}(t;\mathscr{A})$ and correspondingly, $\mathscr{C}_{\alpha}(t;\mathscr{A})$ coincide with the strongly continuous cosine function $\mathscr{C}(t;\mathscr{A})$. Furthermore, for $\alpha=2$, two parameter Mittag-Leffler type functions are converting to the hyperbolic sine and cosine functions, respectively:
\begin{align}
	tE_{2,2}(M\norm{\mathscr{B}}t^{2})=\sum_{k=0}^{\infty}\frac{M^{k}\norm{\mathscr{B}}^{k}t^{2k+1}}{(2k+1)!}=\frac{1}{\sqrt{M\norm{\mathscr{B}}}}\sinh\Big(\sqrt{M\norm{\mathscr{B}}}t\Big), \quad t \in \mathbb{R},\\
	E_{2,1}(M\norm{\mathscr{B}}t^{2})=\sum_{k=0}^{\infty}\frac{M^{k}\norm{\mathscr{B}}^{k}t^{2k}}{(2k)!}=\cosh\Big(\sqrt{M\norm{\mathscr{B}}}t\Big), \quad t \in \mathbb{R}.
\end{align}

\begin{theorem}\label{class}
	Let $\mathscr{A}$ be the infinitesimal generator of the strongly continuous cosine family $\left\lbrace \mathscr{C}(t;\mathscr{A}): t \in \mathbb{R}\right\rbrace $ with  $\|\mathscr{C}(t;\mathscr{A})\|\leq Me^{\omega |t|}$ for all $t \in \mathbb{R}$, and let $\left\lbrace \mathscr{S}(t;\mathscr{A}): t \in \mathbb{R}\right\rbrace $ denote the strongly continuous sine family associated with $\mathscr{C}$. Furthermore, suppose that $\mathscr{B}\in \mathcal{L}(\mathcal{X})$.  Then, strongly continuous families of linear bounded operators $\mathscr{C}(\cdot;\mathscr{A}+\mathscr{B}),\mathscr{S}(\cdot;\mathscr{A}+\mathscr{B})\in \mathcal{L}(\mathcal{X})$ generated by $\mathscr{A}+\mathscr{B}$ (defined on $\mathcal{D}(\mathscr{A})$) can be represented by the series expansion:
	\begin{align*}
		&\mathscr{C}(t;\mathscr{A}+\mathscr{B})x\coloneqq \sum_{n=0}^{\infty}\mathscr{C}_{n}(t,A)x, \quad x \in \mathcal{X},\\
		&\mathscr{S}(t;\mathscr{A}+\mathscr{B})x\coloneqq \sum_{n=0}^{\infty}\mathscr{S}_{n}(t;\mathscr{A})x, \quad x\in \mathcal{X},
	\end{align*}
	where for $t\in \mathbb{R}$, $x\in \mathcal{X}$ and $n \in \mathbb{N}$,  
	\begin{align*}
		&\mathscr{C}_{0}(t;\mathscr{A})x\coloneqq \mathscr{C}(t;\mathscr{A})x,\quad \mathscr{S}_{0}(t;\mathscr{A})x\coloneqq \mathscr{S}(t;\mathscr{A})x,\nonumber\\
		&\mathscr{C}_{n}(t;\mathscr{A})x\coloneqq \int_{0}^{t}\mathscr{C}(t-s;\mathscr{A})\mathscr{B}\mathscr{S}_{n-1}(s;\mathscr{A})x\mathrm{d}s,\\
		&\mathscr{S}_{n}(t;\mathscr{A})x\coloneqq \int_{0}^{t}\mathscr{S}(t-s;\mathscr{A})\mathscr{B}\mathscr{S}_{n-1}(s;\mathscr{A})x\mathrm{d}s.
	\end{align*}
\end{theorem}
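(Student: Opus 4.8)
The plan is to read Theorem \ref{class} as the boundary case $\alpha = 2$ of Theorem \ref{thm1}, so that essentially all of the analytic content is already available, and to supply only the two classical-specific matchings. The first thing I would record is the identification flagged in the Remark: when $\alpha = 2$ one has $g_{\alpha-1} = g_1 \equiv 1$ on $(0,\infty)$, whence $\mathscr{T}_2(t;\mathscr{A})x = \mathcal{I}_t^{1}\mathscr{C}(t;\mathscr{A})x = \int_0^t \mathscr{C}(s;\mathscr{A})x\,\mathrm{d}s = \mathscr{S}(t;\mathscr{A})x$. With this in hand the recursions defining $\mathscr{S}_n$ and $\mathscr{C}_n$ in the statement are literally \eqref{Salphan} and \eqref{Calphan} evaluated at $\alpha = 2$ (with $\mathscr{T}_2$ replaced by $\mathscr{S}$), so the iterates to be controlled are exactly those of the fractional theorem.

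Next I would transcribe the estimates of Theorem \ref{thm1} to $\alpha = 2$. From $\|\mathscr{C}(t;\mathscr{A})\| \leq M e^{\omega|t|}$ one gets $\|\mathscr{S}(t;\mathscr{A})\| \leq M|t|e^{\omega|t|}$, and the induction behind \eqref{formula-1}--\eqref{formula-2}, using the convolution identity \eqref{galpha}, yields for $t \geq 0$ the bounds $\|\mathscr{S}_n(t;\mathscr{A})\| \leq M^{n+1}\|\mathscr{B}\|^n e^{\omega t} g_{2n+2}(t)$ and $\|\mathscr{C}_n(t;\mathscr{A})\| \leq M^{n+1}\|\mathscr{B}\|^n e^{\omega t} g_{2n+1}(t)$. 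Summing these and invoking the closed forms recorded in the Remark turns the majorants \eqref{3}, \eqref{3'} into the classical ones $\|\mathscr{S}(t;\mathscr{A}+\mathscr{B})\| \leq \frac{M}{\sqrt{M\|\mathscr{B}\|}} e^{\omega t}\sinh\!\big(\sqrt{M\|\mathscr{B}\|}\,t\big)$ and $\|\mathscr{C}(t;\mathscr{A}+\mathscr{B})\| \leq M e^{\omega t}\cosh\!\big(\sqrt{M\|\mathscr{B}\|}\,t\big)$, so both series converge uniformly on compact subsets of $\mathbb{R}_+$ in the operator-norm topology. Strong continuity of each iterate, and hence of the sums, is inherited from the splitting of the difference integrals in \eqref{first}, \eqref{second}.

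For the generation step I would specialize \eqref{kosinus}--\eqref{sinus} to $\alpha = 2$, where they become $\int_0^\infty e^{-\lambda t}\mathscr{C}(t;\mathscr{A})x\,\mathrm{d}t = \lambda\,\mathcal{R}(\lambda^2;\mathscr{A})x$ and $\int_0^\infty e^{-\lambda t}\mathscr{S}(t;\mathscr{A})x\,\mathrm{d}t = \mathcal{R}(\lambda^2;\mathscr{A})x$. Repeating the Fubini computation of \eqref{bir}, \eqref{bir-1} and resumming through Lemma \ref{lem} and Corollary \ref{corol} then gives $\int_0^\infty e^{-\lambda t}\mathscr{S}(t;\mathscr{A}+\mathscr{B})x\,\mathrm{d}t = \mathcal{R}(\lambda^2;\mathscr{A}+\mathscr{B})x$ and $\int_0^\infty e^{-\lambda t}\mathscr{C}(t;\mathscr{A}+\mathscr{B})x\,\mathrm{d}t = \lambda\,\mathcal{R}(\lambda^2;\mathscr{A}+\mathscr{B})x$ for $\mathrm{Re}(\lambda) > \omega$, which by uniqueness of the Laplace transform identifies the two sums with the families generated by $\mathscr{A}+\mathscr{B}$.

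The genuinely classical point, and the one I would treat most carefully, is the passage from $\mathbb{R}_+$ to all of $\mathbb{R}$: the fractional framework of Theorem \ref{thm1} lives on $\mathbb{R}_+$, whereas a classical cosine family is defined on $\mathbb{R}$ and must satisfy the d'Alembert functional equation. The cleanest route is to extend $\mathscr{C}(\cdot;\mathscr{A}+\mathscr{B})$ and $\mathscr{S}(\cdot;\mathscr{A}+\mathscr{B})$ to $\mathbb{R}$ by evenness and oddness, $\mathscr{C}(-t) = \mathscr{C}(t)$ and $\mathscr{S}(-t) = -\mathscr{S}(t)$, and then conclude via the generation theorem for classical cosine families: the resolvent identity above exhibits $\lambda\,\mathcal{R}(\lambda^2;\mathscr{A}+\mathscr{B})$ as the Laplace transform of a strongly continuous, exponentially bounded family, which forces that family to be the cosine family generated by $\mathscr{A}+\mathscr{B}$, so that the functional equation holds automatically. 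I expect the only real work, as opposed to transcription, to be checking that this two-sided extension is consistent with the $\mathbb{R}_+$ construction — everything else is the $\alpha = 2$ shadow of the argument already carried out for Theorem \ref{thm1}.
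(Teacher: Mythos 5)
Your proposal is correct and coincides with the paper's treatment: the paper offers no separate proof of Theorem \ref{class}, presenting it inside the Remark precisely as the $\alpha=2$ specialization of Theorem \ref{thm1}, with the identifications $\mathscr{T}_{2}(\cdot;\mathscr{A})=\mathscr{S}(\cdot;\mathscr{A})$ and the Mittag-Leffler majorants degenerating to $\sinh$ and $\cosh$ --- exactly the reductions you carry out. Your additional care in extending the constructed families from $\mathbb{R}_{+}$ to all of $\mathbb{R}$ by parity and identifying them via Laplace-transform uniqueness addresses a point the paper passes over in silence, and strengthens rather than departs from its argument.
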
	
\end{remark}

We conclude this paper with an open question concerning with the perturbation of infinitesimal generators of the fractional strongly continuous cosine families:	
\begin{itemize}
	\item If $\mathscr{A}$ is the infinitesimal generator of a fractional strongly continuous cosine (sine) family and $\mathscr{B}$ is a closed linear operator in a Banach space $\mathcal{X}$, is $\mathscr{A}+\mathscr{B}$ the infinitesimal generator of a fractional strongly continuous cosine (sine) family?
\end{itemize}


\begin{thebibliography}{99}
		
	\bibitem{philips}
	R.S. Phillips, Perturbation theory for semigroups of linear operators, Trans. Am. Math. Soc., 74 (1954) 199-221.
		
	\bibitem{TW3}
	C.C. Travis, G.F. Webb, Perturbation of strongly continuous cosine family generators, Colloquium Mathematicae, 45(2) (1981) 277-285.	
		
	\bibitem{lutz}
	D. Lutz, On bounded time-dependent perturbations of operator cosine functions, Aequationes Mathematicae, 23 (1981) 197-203.
	
	
	\bibitem{bazhlekova2}
	E. Bazhlekova, Perturbation properties for abstract evolution equations of fractional order, Fract. Cal. Appl. Anal., 2(4) (1999) 359-366.
	
	\bibitem{A-H-M}
	A. Ahmadova, I.T. Huseynov, N.I. Mahmudov, Perturbation theory for fractional evolution equations in a Banach space, preprint at 	arXiv:2108.13188.
		
	\bibitem{engel}
	K.-J. Engel, R. Nagel, One-Parameter semigroups for linear evolution equations, vol. 194, Graduate Texts in Mathematics, Springer-Verlag, New York, 2000.	
		
		
		
		
	\bibitem{kexue1}
	K. Li, Fractional order semilinear Volterra integrodifferential equations in Banach spaces, Topol. Methods Nonlinear Anal., 47(2) (2016) 439-455.
		
		
		
	
	
		
	
		
		
		
		
		
		
	
		
		
		
		
	
		
		
	
		
		
		
	
		
		
		
	\end{thebibliography}
\end{document}